\newcommand{\extaddress}[1]{\underline{#1}}
\numberwithin{equation}{section}
\renewcommand{\u}{{\tt u}}
\newcommand{\bdyit}[2]
             {{\rule{0pt}{0pt}_{\mbox{$\scriptstyle #2$}}^{\mbox{%
                   $\scriptstyle #1$}} }}
\newcommand{\addu}{\extaddress{\u}}
 \newtheoremstyle{numberedstyle}
   {9pt}
   {9pt}
   {\normalfont}
   {}
   {\bfseries}
   {.}
   {\newline}
   {}
\newtheorem{thm}{Theorem}[section]%
\newtheorem{lem}[thm]{Lemma}%
\newtheorem{prop}[thm]{Proposition}%
\theoremstyle{numberedstyle}
\newtheorem{defn}[thm]{Definition}%
\title{Connected escaping sets of exponential maps}
\author{Lasse Rempe}
\address{Department of Mathematical Sciences, University of Liverpool, Liverpool L69 7ZL, UK}
\email{l.rempe@liverpool.ac.uk}
\thanks{Supported by EPSRC Grant EP/E017886/1 and Fellowship EP/E052851/1.}
\begin{document} 
 \begin{abstract}
  We show that for many parameters $a\in\C$, the set $I(f_{a})$ of points that
   converge to infinity under
   iteration of the exponential map $f_a(z)=e^z+a$ is connected. This includes all parameters
   for which the singular value $a$ escapes to infinity under iteration of $f_a$.
 \end{abstract}
 
 \maketitle

\medskip

 \section{Introduction}
  If $f:\C\to\C$ is a transcendental entire function (that is, a non-polynomial holomorphic
   self-map of the complex plane), the \emph{escaping set} of $f$ is defined as
    \[ I(f) := \{z\in\C: f^n(z)\to\infty\}. \]
   (Here $f^n=\underset{n\text{ times}}{\underbrace{f\circ\dots\circ f}}$ 
   denotes the $n$-th iterate of $f$, as usual.)

  This set has recently received much attention in the study of transcendental dynamics,
   due to the structure it provides to the dynamical plane of such functions. It is neither an
   open nor a closed subset of the complex plane and tends to have interesting
   topological properties. In the simplest cases 
   (see \cite{devaneykrych,baranskihyperbolic,strahlen,boettcher}), the set $I(f)$ is homeomorphic
   to a subset of a ``Cantor Bouquet'' (a certain uncountable disjoint union of curves to 
   $\infty$), and in particular $I(f)$ is disconnected for these functions. It has recently come to light that there are many situations where $I(f)$ is in fact
   connected. Rippon and Stallard showed that this is the case for any entire function
   having a multiply-connected wandering domain \cite{ripponstallardfatoueremenko} and also for
   many entire functions of small order of growth \cite{ripponstallardsmallgrowth}. 
   These examples have
   infinitely many critical values. The latter condition is not necessary, as there
   are even maps with connected escaping set in the family
   \[ f_{a}:\C\to\C;\quad z\mapsto \exp(z)+a\]
   of \emph{exponential maps}, which may be considered 
   the simplest parameter space
   of transcendental entire functions. (These maps have no critical points, and
   exactly one \emph{asymptotic value}, namely the omitted value $a$.)
   Indeed, it was shown in \cite{escapingconnected}
   that the escaping set is connected for the standard exponential map $f_0$, while
   all path-connected components of $I(f)$ are relatively closed and nowhere dense. The proof
   uses previously known results about this particular function, thus leaving open the
   possibility of the connectedness of $I(f_0)$ being a rather unusual phenomenon.

 Motivated by this result, Jarque \cite{jarqueconnected} showed that $I(f_a)$ is connected
  whenever $a$ is a \emph{Misiurewicz parameter}, i.e.\ when
  the singular value $a$ is preperiodic. In this note, we extend his proof
  to a wider class of parameters. Our results suggest that connectedness of the escaping
  set is in fact true for ``most'' parameters for which the singular value belongs to the Julia 
  set $J(f_a)=\cl{I(f_a)}$.\footnote{
   The Julia set is defined as the set of non-normality of the 
       family of iterates of $f$. For certain entire functions, including all exponential maps,
       it coincides with the closure of $I(f_a)$ by 
       \cite{alexescaping,alexmisha}.}
  If $a\notin J(f_a)$, then $f_a$ has an attracting or parabolic
  periodic orbit, and it is well-known that the Julia set, 
  and hence the escaping set, is a disconnected
  subset of $\C$. 

  The main condition used in our paper is the following combinatorial notion, first introduced 
   in \cite{nonlanding}. 

 \begin{defn} \label{defn:accessible}
   We say that the singular value
   $a$ of an exponential map $f=f_a$ 
   is \emph{accessible} if $a\in J(f)$ and  
   there is an injective curve 
   $\gamma:[0,\infty)\to J(f)$ with $\gamma(0)=a$, $\gamma(t)\in I(f)$ for $t>0$ 
   and $\re \gamma(t)\to\infty$ as $t\to\infty$.
 \end{defn}
 \begin{remark}[Remark 1]
  It follows from \cite[Corollary 4.3]{markuslassedierk} that this definition
   is indeed equivalent to the one given in \cite{nonlanding}, and in particular that
   the requirement  
   $\re \gamma(t)\to\infty$ could be omitted. 
 \end{remark}
 \begin{remark}[Remark 2]
  It is not known whether the condition that the singular value $a$ is accessible
   is always satisfied when $a$ belongs to the Julia set  (as far as we know,
   this is an open question even for quadratic polynomials). Known cases include all 
   Misiurewicz parameters, all parameters for which the singular value escapes 
   and a number of others. Compare \cite[Remark 2 after Definition 2.2]{nonlanding}.  
 \end{remark}
 
 If $\gamma$ is as in this definition, then every component of   
  $f^{-1}(\gamma)$ is a curve
  tending to $\infty$ in both directions. 
  The set $\C\setminus f^{-1}(\gamma)$ consists of
  countably many ``strips'' $S_k$ ($k\in\Z$), which we will assume are 
  labelled such that
  $S_k = S_0 + 2\pi i k$ for all $k$. For our purposes, it does not matter 
  which strip is labelled
  as $S_0$, although it is customary to use one of two conventions: either
  $S_0$ is the strip that contains the points $r+\pi i$ for sufficiently large
  $r$, or alternatively the strip containing the singular value $a$ (provided
  that $f(a)\notin \gamma$). 

 For any point $z\in \C\setminus I(f)$, there is a sequence $\addu=\u_0\u_1 \u_2 \dots $
  of integers, called the \emph{itinerary} with respect to
  this 
  partition, such that 
  $f^{j-1}(z)\in S_{\u_j}$ for all $j\geq 0$. Every escaping point 
  whose orbit does not
  intersect the curve $\gamma$ also has such an itinerary. The itinerary
  of the singular value (if it exists) is called the \emph{kneading sequence}
  of $f$. 

  \begin{thm} \label{thm:main}
   Let $f(z)=\exp(z)+a$ be an exponential map. If
   \begin{enumerate}
    \item the singular value $a$ belongs to the escaping set, or
    \item the singular value $a$ belongs to $J(f)\setminus I(f)$  and 
     is accessible with non-periodic kneading
           sequence, 
   \end{enumerate}
   then $I(f)$ is a connected subset of $\C$. 
  \end{thm}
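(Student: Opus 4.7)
The plan is to extend Jarque's approach by producing a dense, connected subset $T\subset I(f)$ and deducing connectedness of $I(f)$ from a standard density argument. First I would extract from the hypothesis a curve $\gamma\colon[0,\infty)\to\C$ with $\gamma(0)=a$, $\gamma(t)\in I(f)$ for $t>0$, and $\re\gamma(t)\to\infty$: in case~2 this is given directly by the accessibility hypothesis, while in case~1 ($a\in I(f)$) such a $\gamma$ is obtained from the dynamic ray through $a$ (or landing at $a$), restricted to its portion from $a$ to $\infty$.

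The concluding step is a density argument. Suppose, for contradiction, that $I(f)=A\sqcup B$ with $A,B$ disjoint, nonempty and relatively open in $I(f)$, so that $A=\tilde A\cap I(f)$ and $B=\tilde B\cap I(f)$ for some open $\tilde A,\tilde B\subset\C$. The open set $\tilde A\cap\tilde B$ lies in $\C\setminus I(f)$, which is meager because $I(f)$ is a dense $G_\delta$; hence $\tilde A\cap\tilde B$ has empty interior and is in fact empty. Now if $T\subset I(f)$ is connected and dense in $\C$, then $T$ must be contained in one of $\tilde A$ or $\tilde B$, and density then forces the containing set to be dense, contradicting the other being nonempty and open.

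The natural candidate is
\[ T \;=\; (\gamma\setminus\{a\})\,\cup\,\bigcup_{n\geq 1} f^{-n}(\gamma), \]
together with $\{a\}$ in case~1. The inclusion $T\subset I(f)$ follows from total invariance of $I(f)$ and the fact that $\gamma\setminus\{a\}\subset I(f)$ (using also $a\in I(f)$ in case~1, and that $a$ has no preimages, so that the preimages $f^{-n}(\gamma)$ sit entirely in $I(f)$); density of $T$ in $\C$ is standard since iterated preimages of any subset meeting $J(f)=\C$ accumulate on all of $J(f)$.

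The hard step will be to establish connectedness of $T$ \emph{in $\C$}, rather than merely in $\hat\C$: the components of $f^{-n}(\gamma)$ are individually disjoint arcs tending to $\infty$ in both directions, and on the Riemann sphere they are linked together only through $\infty$. To produce genuine crossings inside $\C$, one exploits the specific hypotheses. In case~1, the fact that $a$ itself escapes gives extra structure along the forward orbit which can be used to produce intersections between $\gamma$ and its iterated preimages. In case~2, the non-periodicity of the kneading sequence excludes the combinatorial obstruction by which a periodic itinerary would let the backward tree of $\gamma$ ``close up'' into a cycle separated from $\gamma$. In both cases, the combinatorics of external addresses with respect to the strip decomposition $\C\setminus f^{-1}(\gamma)=\bigsqcup_{k\in\Z} S_k$ is the main tool for tracking how preimage arcs meet $\gamma$ and for showing that the result is genuinely connected.
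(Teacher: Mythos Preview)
Your reduction step is sound: if some $T\subset I(f)$ is connected and dense in $\C$, then $I(f)$ is connected (you only need density of $I(f)$ for this, not the $G_\delta$ claim, which is in fact dubious). The problem is the ``hard step'': your candidate $T=(\gamma\setminus\{a\})\cup\bigcup_{n\ge1}f^{-n}(\gamma)$ is a countable union of pairwise \emph{disjoint} arcs, and your proposal does not explain how to show it is connected in $\C$. There are no ``genuine crossings'' to produce. Already for $f(z)=e^z$ with $\gamma=[0,\infty)$ one has $f^{-1}(\gamma)=\{\im z\in 2\pi\Z\}$, and every curve in $f^{-2}(\gamma)$ lies strictly inside one of the open strips and meets none of these lines at any finite point; the same persists at all levels. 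So the picture you describe in case~1, where escaping of $a$ ``produces intersections between $\gamma$ and its iterated preimages'', simply does not occur. If $T$ is connected, it must be through accumulation (in the spirit of the topologist's sine curve), and proving that requires real work that your sketch does not supply.

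This is also not Jarque's approach. Jarque, and the present paper, argue by separators: if $I(f)$ were disconnected, Lemma~\ref{lem:disconnected} produces a closed connected $A\subset\C\setminus I(f)$ separating $I(f)$; one then studies the dynamics of $A$. Since $A$ avoids $f^{-1}(\gamma)$, it has a well-defined itinerary, which is shown to be exponentially bounded (Lemma~\ref{lem:stillseparating}); Proposition~\ref{prop:boundedimage} then yields a first $k_1$ with $f^{k_1}(A)$ bounded and a first $k_0<k_1-1$ with $f^{k_0}(A)$ unbounded to the left, and comparing the two forces $\sigma^{k_1-k_0-1}(\addu)=\addu$, i.e.\ the kneading sequence is periodic (Theorem~\ref{thm:nonperiodic}). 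The remaining case, $a$ an escaping non-endpoint, is dispatched separately via Proposition~\ref{prop:comb}. None of this goes through a dense connected subset of $I(f)$; it works on the complementary side, with the would-be separator.
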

 \begin{remark}[Remark 1]
  All \emph{path-connected} component of $I(f)$ are nowhere dense
   under the hypotheses of the theorem \cite[Lemma 4.2]{nonlanding}. 
 \end{remark}
 \begin{remark}[Remark 2]
  The theorem applies, in particular, to the exponential map $f=\exp$; this gives an
   alternative proof of the main result of \cite{escapingconnected}.
 \end{remark}

Conjecturally, if $f_a$ has a Siegel disk with bounded-type rotation number, then the 
 singular value $a$ is accessible in our sense, and furthermore accessible from the
 Siegel disk. In this case, the kneading sequence would be periodic and the Julia set
 (and hence the escaping set) disconnected. On the other hand, it is plausible that
 the escaping set of $f_a$ is connected whenever $f_a$ does not have a nonrepelling periodic
 orbit. 

 The second half of Theorem \ref{thm:main} does 
  not have a straightforward generalization to
  other families. This is because the proof relies on the fact that 
  $f^{-1}(\gamma)\subset I(f)$ because the singular value $a$ is omitted, and hence connected
  sets of
  nonescaping points cannot cross the partition boundaries.
  In fact, Mihaljevi\'c-Brandt \cite{helenaconjugacy} has 
  shown that for many postcritically preperiodic
  entire functions, including those in the complex cosine family 
  $z\mapsto a\exp(z) + b\exp(-z)$, the escaping set is disconnected.
  On the other hand, the proof of the first part of our theorem should apply to
  much more general functions; 
  in particular, to all cosine maps for which both critical values escape. 

 We recall that \emph{Eremenko's conjecture} \cite{alexescaping}
  states that every connected component of the escaping set of a transcendental entire function
  is unbounded. This is true for all exponential maps \cite{expescaping} and indeed
  for much larger classes of entire functions \cite{strahlen,eremenkoproperty}. 
  Despite progress, the question remains open in general, while it is now known 
  that some related but stronger properties may fail  
  (compare e.g.\ \cite{strahlen}). The connectivity of the escaping set for a wide variety of
  exponential maps illustrates some of the counterintuitive properties one may encounter in the
  study of connected components of a planar set that is neither
  open nor closed (and exposes the difficulties of constructing a counterexample
  should the conjecture turn out to be false). It seems likely that a better 
  understanding of these
  phenomena will provide further insights into Eremenko's conjecture. 

 \subsection*{Structure of the article.} In Section \ref{sec:escapingpoints},
  we collect some background about the escaping set of an exponential map.
  In Section \ref{sec:setsofnonescapingpoints}, we establish an important
  preliminary result. The proof of Theorem \ref{thm:main} is then carried 
  out in Section \ref{sec:proof}, separated into two different cases
  (Theorems \ref{thm:nonperiodic} and \ref{thm:nonendpoint}). 

 \subsection*{Basic notation.} As usual, we denote the complex plane by
  $\C$, and the Riemann sphere by $\Ch=\C\cup\{\infty\}$. The closure
  of a set $A$ in $\C$ and in $\Ch$ will be 
  denoted $\cl{A}$ resp.\ $\hat{A}$. 
  Boundaries will be understood to be 
  taken in $\Ch$, unless explicitly stated otherwise. 

 \section{Escaping points of exponential maps}\label{sec:escapingpoints}

  It was shown by Schleicher and Zimmer \cite{expescaping}
  that the escaping set $I(f_a)$ of any exponential map is
  organized in curves to infinity, called \emph{dynamic rays} or
  \emph{hairs}, which come equipped with a combinatorial structure and
  ordering. We do not require a precise understanding of this 
  structure. Instead, we 
   take an axiomatic approach, collecting here only 
  those properties that will be used in 
  our proofs.

 \begin{prop} \label{prop:properties}
   Let $f(z)=\exp(z)+a$ be an exponential map. 
  \begin{enumerate}
   \item If $a\in I(f)$, then $a$ is accessible in the sense of
     Definition \ref{defn:accessible}.
   \item If $U\subset\C$ is an open set with $U\cap J(f)\neq\emptyset$, then
     there is a curve $\gamma:[0,\infty)\to I(f)$ with $\gamma(0)\in U$ and
     $\re \gamma(t)\to \infty$ as $t\to\infty$. \label{item:toplusinfinity}
  \end{enumerate}
 \end{prop}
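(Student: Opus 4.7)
The plan is to deduce both statements directly from the structure theorem for escaping points of exponential maps due to Schleicher and Zimmer~\cite{expescaping}. Their result asserts that every $z \in I(f)$ either lies in the interior of a \emph{dynamic ray}---an injective curve $g\colon (0,\infty) \to I(f)$ parametrized in such a way that $\re g(t) \to \infty$ as $t \to \infty$---or else is the landing point of such a ray, meaning that $g$ extends continuously to $[0,\infty)$ with $g(0) = z$.

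For part~(1), I would apply this dichotomy to the escaping singular value $z = a$. If $a$ is the landing point of a ray $g$, take $\gamma := g$ on $[0,\infty)$; if instead $a = g(t_0)$ lies in the interior of a ray, take $\gamma(t) := g(t_0 + t)$. Either way $\gamma$ is an injective curve with $\gamma(0) = a$, $\gamma(t) \in I(f) \subset J(f)$ for every $t > 0$, and $\re \gamma(t) \to \infty$ as $t \to \infty$, verifying Definition~\ref{defn:accessible}.

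For part~(2), I would first use the density of $I(f)$ in $J(f)$ for exponential maps (by \cite{alexescaping,alexmisha}, as recalled in the footnote, $J(f) = \cl{I(f)}$) to pick a point $z_0 \in U \cap I(f)$. Applying the same Schleicher--Zimmer dichotomy to $z_0$ then produces an injective curve $\gamma\colon [0,\infty) \to I(f)$ with $\gamma(0) = z_0 \in U$ and $\re \gamma(t) \to \infty$, exactly as required.

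The argument is essentially a matter of invoking \cite{expescaping} twice, so there is no substantial obstacle. The only point that needs a brief moment of care is that the asymptotic condition $\re \gamma(t) \to \infty$ survives truncation of the bounded end of the ray; but this is immediate, because that condition is a statement about the behaviour of $g$ near $t = \infty$ and is unaffected by restricting to a tail $[t_0, \infty)$. Thus the real work has been done in \cite{expescaping} and in the density statement from \cite{alexescaping,alexmisha}; the proposition itself is just an extraction of what those results give.
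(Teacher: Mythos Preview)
Your argument is correct. For part~(1), your approach coincides with the paper's: both simply invoke the relevant result from \cite{expescaping} (the paper cites Theorem~6.5 there directly, without unpacking the dichotomy).

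For part~(2), your route differs from the paper's. You use the density of $I(f)$ in $J(f)$ to locate an escaping point $z_0 \in U$, and then invoke the full Schleicher--Zimmer classification to produce a ray through it. The paper instead starts from the older and weaker fact (going back to \cite{dgh,devaneytangerman}) that $I(f)$ contains uncountably many pairwise disjoint curves to $\infty$; it chooses one such curve $\alpha$ that avoids the entire postsingular orbit, applies Montel's theorem to find $n\ge 1$ and $z_0 \in U$ with $f^n(z_0) = \alpha(0)$, and then pulls $\alpha$ back along the appropriate branch of $f^{-n}$ to obtain $\gamma$. Your approach is more direct and transparent; the paper's has the mild advantage of relying only on results that predate the full classification in \cite{expescaping}, and of deriving the asymptotic $\re \gamma(t) \to +\infty$ explicitly from $|f(\gamma(t))| \to \infty$ via the elementary bound $|f(z)| \le \exp(\re z) + |a|$, rather than taking it as part of the ray parametrization.
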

 \begin{proof}
  The first statement 
   follows from \cite[Theorem 6.5]{expescaping}. 

  To prove the second claim, we use the fact that there is a collection of
   uncountably many pairwise disjoint curves to $\infty$ in the
   escaping set. (This also follows from \cite{expescaping}, but has
   been known much longer: see \cite{dgh,devaneytangerman}.) 

  Hence there is a curve $\alpha:[0,\infty)\to I(f)$ with
   $\lim_{t\to\infty}|\alpha(t)|=\infty$ and 
   $f^j(a)\notin \alpha$ for all $j\geq 0$. In particular, $f^{-1}(\alpha(0))$ is an infinite set,
   and by Montel's theorem, there exist
   $n\geq 1$ and 
   some $z_0\in U$ such that $f^n(z_0)=\alpha(0)$. We can analytically 
   continue the branch of $f^{-n}$ that takes $\alpha(0)$ to $z_0$ to obtain
   a curve $\gamma:[0,\infty)\to I(f)$ with
   $f^n\circ\gamma = \alpha$ and $\gamma(0)=z_0\in U$. 

 We have $|f(\gamma(t))|\to\infty$ as
   $t\to\infty$. As $|f(z)|\leq \exp(\re(z))+|a|$ for all $z$, we thus have
   $\re\gamma(t)\to+\infty$ as $t\to\infty$, as
   claimed. 
 \end{proof}

\subsection*{Exponentially bounded itineraries} 
 For the rest of this section, fix an exponential map $f=\exp(z)+a$ 
  with accessible
  singular value, and an associated partition into itinerary strips
  $S_j$. Recall that we defined the itinerary of a point only if its orbit
  never belongs to the strip boundaries. 

 It simplifies terminology if we can speak of itineraries for
  \emph{all} points. Hence we adopt the (slightly non-standard)
  convention that any sequence
  $\addu=\u_0 \u_1 \u_2 \dots$ with $f^j(z)\in\cl{S_{\u_j}}$ is called an
  itinerary of $z$. Thus $z$ has a \emph{unique itinerary} if and only if
  its orbit does not enter the strip boundaries. 
 \begin{defn}
  An itinerary $\addu=\u_0 \u_1 \u_2 \dots$ is \emph{exponentially bounded} if there is 
   a number $x\geq 0$ such
   that 
    $2\pi |\u_j| \leq \exp^j(x)$
   for all $j\geq 0$.
 \end{defn}
\begin{remark}[Remark 1]
 At first glance it may seem that the itinerary of every point
  $z\in\C$ is exponentially bounded, 
  since certainly $|f^n(z)|$, and thus $|\im f^n(z)|$, are exponentially bounded
  sequences.
  However,
  in general, we have no a priori control over how the imaginary parts in the strips $S_j$ behave
  as the real parts tend to $-\infty$. 

 Nonetheless, it seems plausible that all points have exponentially bounded 
   itineraries;  
  certainly this is true for well-controlled cases such as Misiurewicz 
  parameters.
  We leave this question aside, as its resolution is not
  required for our purposes.
\end{remark}
\begin{remark}[Remark 2]
 If $z$ does not have a unique itinerary, we take the statement ``$z$ has exponentially
  bounded itinerary'' to mean that all itineraries of $z$ are exponentially bounded.
  However, two itineraries of $z$
  differ by at most $1$ in every entry, so this is equivalent to
  saying that $z$ has at least one exponentially bounded itinerary. 
\end{remark}

\begin{prop} \label{prop:expbounded}
 If $z\in\C$ belongs to the closure of some 
  path-connected component of $I(f)$ (in particular, if $z\in I(f)$ or $z=a$), then 
  $z$ has exponentially bounded itinerary.
\end{prop}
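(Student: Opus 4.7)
The plan is to approximate any $z\in\cl{C}$ by genuine escaping points $z_n\in C$, use standard exponential control of the iterates to bound each $z_n$'s itinerary uniformly in $n$, and transfer the bound to $z$ by continuity of the iterates. Two preliminary ingredients are needed: a geometric description of the strips $S_k$ and the usual comparison of $|f^j(w)|$ with iterated exponentials.

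For the geometric ingredient, $f$ maps each $S_k$ conformally onto $\C\setminus\gamma$ with inverse given by a branch of $w\mapsto\log(w-a)$, so each $\cl{S_k}$ has imaginary parts confined to an interval of length $2\pi$; the convention $S_k=S_0+2\pi ik$ makes these intervals vertical translates of one another, so there is a constant $C_0$ independent of $k$ with $|\im y-2\pi k|\le C_0$ for every $y\in\cl{S_k}$, and in particular
\[
 2\pi|k|\le|\im y|+C_0\le|y|+C_0.
\]
For the analytic ingredient, the crude bound $|f(w)|\le\exp(|w|)+|a|$ iterates to $|f^j(w)|\le\exp^j(|w|+K)$ for all $j\ge 0$, where $K=K(|a|,|w|)$ is a constant independent of $j$.

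Now fix $z\in\cl{C}$ and any itinerary $\u_0\u_1\u_2\dots$ of $z$, and pick a sequence $z_n\in C\subset I(f)$ with $z_n\to z$ and $|z_n|\le|z|+1$. Combining the two ingredients gives
\[
 2\pi|\u_j(z_n)|\le|f^j(z_n)|+C_0\le\exp^j(|z|+1+K)+C_0,
\]
uniformly in $j$ and in $n$. To transfer the bound to $z$, fix $j$: continuity of $f^j$ ensures that for $n$ large (depending on $j$), $f^j(z_n)$ lies in a small neighbourhood of $f^j(z)\in\cl{S_{\u_j}}$, which meets only the closures of $S_{\u_j-1},S_{\u_j},S_{\u_j+1}$ because adjacent strips differ by one in label. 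Hence $|\u_j(z_n)-\u_j|\le 1$, yielding $2\pi|\u_j|\le\exp^j(|z|+1+K)+C_0+2\pi$; the elementary inequality $\exp^j(x_0+c)\ge\exp^j(x_0)+c$ for $x_0,c\ge 0$ absorbs the additive error into the exponent, so $2\pi|\u_j|\le\exp^j(x)$ for $x=|z|+1+K+C_0+2\pi$, uniformly in $j$.

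The parenthetical cases fall under this umbrella: if $z\in I(f)$ then $z$ lies in its own path-connected component of $I(f)$, and if $a$ is accessible the tail $\gamma((0,\infty))$ from Definition~\ref{defn:accessible} lies in a single path-connected component $C$ of $I(f)$ with $a\in\cl{C}$. The main obstacle I anticipate is the strip geometry, since $\gamma$ is an arbitrary accessibility curve and nothing a priori forbids its preimage components from oscillating wildly across $\C$; however, the branch-of-log description of $f^{-1}|_{S_k}$ controls $\im y-2\pi k$ up to a bounded error, which is precisely what the argument needs.
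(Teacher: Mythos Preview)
Your geometric ingredient is the gap. You claim that because $(f|_{S_k})^{-1}$ is a branch of $w\mapsto\log(w-a)$ on $\C\setminus\gamma$, the imaginary parts of $\cl{S_k}$ lie in an interval of length $2\pi$, giving a global constant $C_0$ with $|\im y-2\pi k|\le C_0$ for all $y\in\cl{S_k}$. This inference fails: a continuous branch of $\arg(w-a)$ on the simply connected domain $\C\setminus\gamma$ has range an interval, but that interval can be unbounded. If $\gamma$ spirals into $a$ (e.g.\ behaves like $a+te^{i/t}$ near $t=0$), the lift of $\gamma$ under $\log(\,\cdot\,-a)$ has imaginary part tending to $+\infty$ as real part tends to $-\infty$, and the strip $S_k$ between two consecutive lifts drifts unboundedly in the imaginary direction. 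The paper's Remark~1 after the definition of exponentially bounded itineraries flags exactly this: one has no a priori control over $\im y$ in $S_k$ as $\re y\to-\infty$.

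Without the global $C_0$, both halves of your argument break. For $z_0\in I(f)$ the repair is easy and is what the paper does: use that $\re f^j(z_0)\ge R$ for all $j$ and bound imaginary parts in $S_0$ only on $\{\re z\ge R\}$, where the boundary curves do have bounded imaginary part. For $z\in\cl{C}\setminus I(f)$, however, your approximation by $z_n\in C$ cannot be salvaged this way: the real parts $\re f^j(z_n)$ need not be uniformly bounded below in $n$ (indeed $\re f^j(z)$ may be arbitrarily negative), so you lose the strip bound precisely at the points you need it. The paper sidesteps this by invoking the structural fact from \cite{markuslassedierk} that the components of $f^{-1}(\gamma)$ are themselves path-connected components of $I(f)$; hence any path-connected component $C$ satisfies $f^j(C)\subset\cl{S_{\u_j}}$ for a single itinerary $\addu$, and therefore $f^j(\cl{C})\subset\cl{S_{\u_j}}$ as well. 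All of $\cl{C}$ then shares the itinerary of any one escaping point in $C$, and the bound transfers directly with no limiting argument.
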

\begin{proof}
 Let $z_0\in I(f)$, and let $\addu$ be an itinerary of $z_0$. 
  Then $\re f^j(z_0)\to+\infty$, and in particular there exists $R\in\R$ such that
  $\re f^n(z_0)\geq R$ for all $j\geq 0$. 

  The domain $S_0$ is bounded by two components of $f^{-1}(\gamma)$. Each of these
   has bounded imaginary parts 
   in the direction where the real parts tend to $+\infty$.
   (In fact, each preimage component is asymptotic to a straight line 
    $\{\im z = 2\pi k\}$ for some $k\in\Z$, but we do not require this fact.)
   In particular, 
    \[ M := \sup\{ |\im z|:\ z\in \cl{S_0},\  \re z \geq R\} <\infty.\]

  Then it follows that
   $|\im f^j(z_0)-2\pi \u_j| \leq M$ for all $j$, and hence
    \[ 2\pi |\u_j| \leq |\im f^j(z_0)|+M. \]
  Set
   $\alpha := \ln(3(|a|+M+2))$. Elementary calculations give 
   \begin{align} \notag
    \exp(|z|+\alpha)&= 3(|a|+M+2)\exp(|z|) \geq
      \exp(|z|) + 2(|a|+M+2) \\ \label{eqn:elementary}&\geq
      \exp(\re z) + |a| + M + \ln 3 + (|a|+M+2) \\ \notag&\geq
      \exp(\re z) + |a|+ M + \alpha \geq |f(z)|+M+\alpha
   \end{align}
   for all $z\in\C$. It follows that
    \[ 2\pi |\u_j| \leq |\im f^j(z_0)|+M \leq |f^j(z_0)|+M \leq \exp^j(|z_0|+\alpha) \]
   for all $j\geq 0$, so $z_0$ has exponentially bounded itinerary. 

 Also, it is shown in \cite{markuslassedierk} 
  that the partition boundaries, i.e.\ the components of 
  $f^{-1}(\gamma)$, are path-connected components of $I(f)$ (where $\gamma\subset I(f)$ 
  is the curve connecting the singular value to infinity). So if $C$ is the path-connected
  component of $I(f)$ containing $z_0$, then 
  $f^j(C)\subset\cl{S_{\u_j}}$, and hence $f^j(\cl{C})\subset\cl{S_{\u_j}}$, for all 
  $j\geq 0$. So all points in $\cl{C}$ have exponentially bounded itinerary, as claimed.
\end{proof}

\subsection*{Escaping endpoints}
There are two types of escaping points: 

 \begin{defn} \label{defn:endpoint}
  Suppose that $f_a$ is an exponential map and $z\in I(f)$. 
   We say that $z$ is a \emph{non-endpoint} if there is an 
   injective curve $\gamma:[-1,1]\to I(f_a)$ with
   $\gamma(0)=z$; otherwise $z$ is called an \emph{endpoint}.
 \end{defn}

 It follows from \cite{markuslassedierk} that this coincides with
  the classification into ``escaping endpoints of rays'' and
  ``points on rays'' given in \cite{expescaping}; we use the above
  definition here because it is easier to state. In \cite{expescaping},
  escaping endpoints were completely classified; we only require the
  following fact. 

 \begin{prop}[{\cite{expescaping}}]
  Let $f_a$ be an exponential map with $a\in I(f)$ (so in particular $a$ is accessible), and suppose
   $a$ is an endpoint. 
   Then the kneading sequence of $f$ is unique and unbounded.
 \end{prop}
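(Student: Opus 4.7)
I would treat the two assertions in turn.

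For uniqueness of the kneading sequence, I would argue by contradiction: suppose that $f^{j}(a)\in f^{-1}(\gamma)$ for some $j\ge 0$, and let $j_0$ be minimal. Let $\beta$ denote the component of $f^{-1}(\gamma)$ containing $f^{j_0}(a)$; as noted just before Theorem \ref{thm:main}, $\beta$ is an injective curve tending to infinity in both directions and lying in $I(f)$. If $j_0=0$ then $\beta$ already passes through $a$, immediately contradicting the endpoint hypothesis. If $j_0\ge 1$, I would pull $\beta$ back along the orbit $a,f(a),\dots,f^{j_0}(a)$ step by step. The inverse branches of $f$ are translates of $\log(\,\cdot\,-a)$, holomorphic on $\C\setminus\{a\}$; since $j_0$ is minimal, $a\notin\beta$, so the branch sending $f^{j_0}(a)\mapsto f^{j_0-1}(a)$ extends analytically along all of $\beta$, producing an injective curve in $I(f)$ through $f^{j_0-1}(a)$. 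Iterating, either some intermediate pullback passes through $a$ (in which case that curve already gives the contradiction) or after $j_0$ pullbacks we obtain an injective curve in $I(f)$ through $a$ directly. In either case, the endpoint hypothesis is violated.

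For unboundedness, I would appeal to the Schleicher--Zimmer classification from \cite{expescaping}. By their results, every escaping point lies on a dynamic ray $g_\addu$ parametrized by an exponentially bounded external address $\addu$, and (under the identification recalled after Definition \ref{defn:endpoint}) a point is an endpoint in the sense of Definition \ref{defn:endpoint} precisely when it is a landing point of its ray as classified in \cite{expescaping}. Once uniqueness has been established, so that the itinerary of $a$ is well-defined, this itinerary coincides with the external address $\addu$ assigned to $a$ in \cite{expescaping}. The classification in \cite{expescaping} shows that an escaping landing point must have an external address that grows too quickly to be bounded; consequently the kneading sequence of $f$ is unbounded.

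The main obstacle lies in the unboundedness step, which is essentially a citation of the detailed combinatorial classification of escaping endpoints carried out in \cite{expescaping}; the substantive work is hidden there. The uniqueness step, by contrast, is self-contained and exploits two features special to exponential maps: there are no critical points, and the singular value $a$ is omitted, so that each inverse branch of $f$ is a translate of $\log(\,\cdot\,-a)$ which continues freely along any curve in $\C\setminus\{a\}$.
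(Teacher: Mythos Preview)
The paper does not prove this proposition; it is stated purely as a citation of \cite{expescaping}. So there is no argument in the paper to compare against beyond the reference itself.

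Your uniqueness argument is a correct self-contained proof that the paper does not supply. The pullback step works because $f:\C\to\C\setminus\{a\}$ is a covering, so any injective arc in $\C\setminus\{a\}$ lifts to an injective arc, and complete invariance of $I(f)$ keeps each lift inside the escaping set; after at most $j_0$ lifts you obtain an injective arc in $I(f)$ through $a$, contradicting the endpoint hypothesis. This is more than the paper offers.

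For unboundedness you defer to \cite{expescaping}, which is exactly what the paper does. One technicality deserves a sentence: the kneading sequence here is the itinerary with respect to the partition by $f^{-1}(\gamma)$, which is not literally the external address used in \cite{expescaping}. However, as in the proof of Proposition~\ref{prop:expbounded}, each strip $S_j$ has imaginary parts within a bounded distance of $2\pi j$ once real parts exceed some fixed $R$; since $a\in I(f)$ implies $\re f^n(a)\to+\infty$, the two sequences eventually differ by a bounded amount in each entry, so one is bounded if and only if the other is. With that remark added, your outline is complete.
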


 For exponential maps with an attracting fixed point, any non-endpoint is
  inaccessible from the
  attracting basin \cite{devgoldberg}. The following is a variant of this fact that holds
  for every exponential map. 

 \begin{prop} \label{prop:comb}
  Suppose that $f=\exp(z)+a$ is an exponential map and suppose that
   $z\in I(f)$ is not an endpoint. Then any closed connected
   set $A\subset\C$ with $z\in A$ and $\#A>1$
   contains uncountably many escaping points. 
 \end{prop}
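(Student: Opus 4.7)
The plan is to reduce the problem to showing uncountability for a forward iterate of $A$, and then to exploit the Cantor-bouquet structure of $I(f)$ near infinity. First, if $A\subseteq I(f)$, then $A$ is a non-degenerate closed connected subset of $\C$, hence uncountable and entirely escaping, and we are done. So assume there exists some $w\in A\setminus I(f)$.

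Because $z\in I(f)$ we have $\re f^n(z)\to+\infty$, while the orbit of $w$ does not tend to $\infty$; pass to a subsequence $n_k\to\infty$ along which $|f^{n_k}(w)|$ remains bounded. The images $A_k:=f^{n_k}(A)$ are then continua containing both $f^{n_k}(z)$ (whose real part tends to $+\infty$) and $f^{n_k}(w)$ (of bounded modulus), so their diameters tend to $+\infty$. Since $I(f)$ is backward invariant, every point of $A_k\cap I(f)$ is the image under $f^{n_k}$ of some point of $A\cap I(f)$; choosing one preimage in $A$ per element of $A_k\cap I(f)$ yields an injection from $A_k\cap I(f)$ into $A\cap I(f)$. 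Hence it suffices to show that $A_k\cap I(f)$ is uncountable for some $k$.

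To establish this uncountability, I would invoke the Cantor-bouquet description of $I(f)$ at infinity. The family of uncountably many pairwise disjoint curves to $\infty$ used in the proof of Proposition~\ref{prop:properties}\,(\ref{item:toplusinfinity}) (see \cite{dgh,devaneytangerman}) restricts, by taking tails, to uncountably many disjoint escaping curves in any given right half-plane. Since $A_k$ is a continuum of arbitrarily large diameter joining a point of bounded modulus to the region $\re z\gg 0$, a topological separation argument in $\widehat{\C}$ should force $A_k$ to meet uncountably many of these bouquet curves, each intersection producing an escaping point in $A_k$.

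The principal obstacle will be making this last topological step rigorous. The countably many partition boundaries $f^{-1}(\gamma)\subset I(f)$ are met by $A_k$ in at most countably many points, so the target uncountability cannot come from them alone: one must genuinely exploit the Cantor-bouquet density of $I(f)$ in right half-planes and verify, via a careful separation argument in $\widehat{\C}$, that a continuum of the shape produced by our iteration cannot avoid all but countably many of the bouquet curves. This topological step is where the non-endpoint hypothesis on $z$ (via the forward-iteration blow-up it permits) has to interact with the combinatorial structure of $I(f)$, and it is the main technical hurdle.
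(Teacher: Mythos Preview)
Your approach has a genuine gap, and in fact cannot work as outlined. Nothing in your argument prior to the final ``topological separation'' step uses the non-endpoint hypothesis: the blow-up $\re f^{n_k}(z)\to+\infty$ holds for \emph{every} escaping point, endpoint or not. But the proposition is false for escaping endpoints. For an exponential map with an attracting fixed point, every escaping endpoint $z$ is accessible from the attracting basin (this is exactly the Devaney--Goldberg phenomenon the paper alludes to just before the proposition); an arc $A$ in the basin landing at $z$ is then a non-degenerate closed connected set with $A\cap I(f)=\{z\}$. For such $A$, each $A_k=f^{n_k}(A)$ is again an arc in the Fatou set landing at the single escaping point $f^{n_k}(z)$, so $A_k\cap I(f)=\{f^{n_k}(z)\}$ however far to the right $A_k$ stretches. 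Thus the assertion you are hoping for --- that a long continuum reaching into a right half-plane must meet uncountably many bouquet curves --- is simply false in general. You say the non-endpoint hypothesis should enter at this last step, but you give no mechanism; and if you try to invoke the non-endpoint property of $f^{n_k}(z)$ here, you are back to proving the original statement for the pair $(A_k,f^{n_k}(z))$, so the iteration has gained nothing.

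The paper's argument is entirely local and uses the non-endpoint hypothesis from the start. By definition there is an arc $\gamma:[-1,1]\to I(f)$ with $\gamma(0)=z$, and the key structural fact (from the dynamic-ray picture) is that the path-connected component of $I(f)$ containing $\gamma$ is accumulated on from both sides by other such components. Assuming for contradiction that $A$ meets only countably many path components of $I(f)$, one selects curves $\gamma_n^{\pm}$ in components disjoint from $A$ that converge to $\gamma$ from either side. These squeeze the closed, connected, non-degenerate set $A$ onto $\gamma$ near $z$, forcing one of the sub-arcs $\gamma([-1,0])$ or $\gamma([0,1])$ to lie in $A$, hence giving uncountably many escaping points in $A$.
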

 \begin{proof}[Sketch of proof]
  The idea is that any
   path-connected component of $I(f)$ is accumulated on
   both from above and below by other such components. This is by
   now a well-known argument; see e.g.\
   \cite[Lemma 3.3]{nonlanding} and \cite[Lemma 13]{expbifurcationlocus},
   where it is used in a slightly different context. We
   provide a few more details for completeness.
  
  We may assume that $A$ intersects only countably many
   different path-connected
   components of $I(f)$; otherwise we are done. Let
    $\gamma:[-1,1]\to I(f)$ be as in Definition \ref{defn:endpoint},
    with $\gamma(0)=z$. 
  
  Then there are two sequences $\gamma^+_n:[-1,1]\to I(f)$ and
   $\gamma^-_n:[-1,1]\to I(f)$ of curves that do not intersect $A$ and
   that converge locally 
   uniformly to $\gamma$ from both sides of $\gamma$. Since $A$ is closed,
   it follows that we must have either $\gamma\bigl([-1,0]\bigr)\subset A$
   or $\gamma\bigl([0,1]\bigr)\subset A$.
 \end{proof}

 \section{Closed subsets of non-escaping points}
   \label{sec:setsofnonescapingpoints}
  Let us say that a set $A\subset\C$ \emph{disconnects} the set 
   $C\subset\C$ if
   $C\cap A=\emptyset$ and 
  (at least) two different connected components of $\C\setminus A$ intersect $C$.
  The following lemma was used in \cite{jarqueconnected} to prove the connectivity
  of the escaping set for Misiurewicz exponential maps. 
  \begin{lem}\label{lem:disconnected}
   Let $C\subset\C$. Then $C$ is disconnected if and only if there is a closed connected set
    $A\subset\C$ that disconnects $C$. 
  \end{lem}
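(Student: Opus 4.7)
The backward direction is immediate: if $A$ is closed, connected, disjoint from $C$, and distinct components $V_1, V_2$ of $\C \setminus A$ each meet $C$, then $\{C \cap V_1,\, C \setminus V_1\}$ is a partition of $C$ into two nonempty relatively open subsets, witnessing that $C$ is disconnected.

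For the forward direction, I would start from a separation $C = F_1 \sqcup F_2$ with $F_1, F_2$ nonempty and $\overline{F_1} \cap F_2 = F_1 \cap \overline{F_2} = \emptyset$, and produce the required $A$ as a connected component of a suitable topological boundary. The distance-based open sets $U_i := \{z \in \C : d(z, F_i) < d(z, F_{3-i})\}$ are disjoint, contain $F_i$, and satisfy $\overline{U_i} \cap F_{3-i} = \emptyset$ by a routine continuity-of-distance argument. Passing to the sphere $\widehat{\C}$, the compact set $K := \widehat{\C} \setminus (U_1 \cup U_2)$ is disjoint from $C$, and for any $p \in F_1, q \in F_2$ the points $p, q$ lie in different components of $\widehat{\C} \setminus K$, since every such component sits entirely inside $U_1$ or entirely inside $U_2$.

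The crucial topological ingredient is the classical planar separation principle: if a compact $K \subset \widehat{\C}$ separates two points, then some single connected component of $K$ already separates them. This is the Phragm\'en--Brouwer property of $S^2$ (equivalently, unicoherence of the sphere, or a consequence of Alexander duality), and I would invoke it as a standard fact. Applying it produces a closed connected $K_0 \subseteq K$ with $p, q$ in different components of $\widehat{\C} \setminus K_0$. Setting $A := K_0 \cap \C$ and observing that removing a single point from an open connected subset of a $2$-manifold preserves connectedness, we conclude that $p, q$ also lie in different components of $\C \setminus A$, so $A$ disconnects $C$. The main obstacle I expect to navigate is ensuring that $A$ itself is connected when $\infty \in K_0$; this would be handled by a preliminary M\"obius transformation sending a carefully chosen point to infinity so that, after constructing bounded open neighbourhoods with the appropriate disjointness properties, $K$ lies entirely in $\C$, forcing $K_0 \subset \C$ and obviating any $\infty$-removal issues, and then pulling back the resulting $A$ to the original plane.
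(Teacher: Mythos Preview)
Your backward direction is fine and matches the paper's. For the forward direction, your strategy via the Phragm\'en--Brouwer/unicoherence property is legitimate, but it is heavier machinery than the paper uses, and your handling of the point at infinity contains a slip.

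The paper avoids any separation principle: from an open $U$ with $z\in U$, $w\notin U$, $\partial U\cap C=\emptyset$ (passing to a component so $U$ is connected), it takes $V$ to be the component of $\hat{\C}\setminus\hat U$ containing $w$. Since $\hat{\C}\setminus V$ contains the connected set $\hat U$, the domain $V$ is simply connected, hence has connected boundary; then $A:=\partial V\cap\C$ is the desired set. So the only topological input is ``a simply connected domain in the sphere has connected boundary'', rather than the full Phragm\'en--Brouwer property.

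In your version, the M\"obius fix is stated backwards: if after the transformation you construct \emph{bounded} open neighbourhoods $U_1,U_2$, then $K=\hat{\C}\setminus(U_1\cup U_2)$ necessarily \emph{contains} $\infty$, the opposite of what you want. What you would actually need is that $U_1\cup U_2$ contains a neighbourhood of $\infty$. The cleanest repair is to skip the sphere altogether: the plane $\C$ is itself unicoherent, so the separation principle you quote already holds for closed (not necessarily compact) subsets of $\C$. Apply it directly to the closed set $K=\C\setminus(U_1\cup U_2)$ to obtain a closed connected $K_0\subset\C$ separating $p$ from $q$; no compactification, no M\"obius map, and no $\infty$-removal are needed. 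With that adjustment your argument is correct, though still less direct than the paper's.
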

  \begin{proof} 
   The ``if'' part is trivial. If $C$ is disconnected, then by the 
    definition of connectivity
    there are two points $z,w\in C$ and an open set $U\subset\C$ with 
    $\partial U\cap C=\emptyset$
    such that $z\in U$ and $w\notin U$. By passing to a connected component
    if necessary, we may assume that $U$ is connected. Let
    $V$ be the connected component of 
    $\Ch\setminus \hat{U}$ that contains $w$. Then $V$ is simply connected
    with $\partial V\subset \partial U\subset\Ch\setminus A$. It follows that
    $\C\setminus V$ has exactly one connected component. Thus
    $A := \partial V\cap \C$ is a closed connected set that
    disconnects $z$ and $w$, as required.
  \end{proof}

 Thus, in order to prove the connectedness of the escaping set,
  we need to study closed connected sets of non-escaping points and show
  that these cannot disconnect $I(f)$. The following proposition
  will be the main ingredient in this argument.

 \begin{prop} \label{prop:boundedimage}
  Let $f=f_a$ be an exponential map with accessible singular value $a$.
   Let $A\subset\C$ be closed and connected. Suppose that furthermore the points in 
   $A$ have uniformly exponentially bounded itineraries, i.e.\ there exists
   a number $x$ with the following property: if $n\geq 0$ and $\u\in\Z$ such that
   $f^n(A)\cap \cl{S_{\u}}\neq\emptyset$, then $2\pi|\u|\leq \exp^n(x)$. 

  If $A\cap I(f)$ is bounded, 
   then there is $n\geq 0$ such that $f^n(A)$ is bounded.
 \end{prop}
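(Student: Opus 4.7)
The plan is to show that, under the hypotheses, $f^2(A)$ is bounded (one iterate suffices when $A$ itself is bounded or satisfies $\sup_A\re z<\infty$).

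Preliminaries. The uniform itinerary bound $|\u_j|\le\exp^j(x)/(2\pi)$, together with the fact that every strip $S_{\u}$ has imaginary-part width $\le 2\pi$, gives $f^n(A)\subset\{|\im w|\le\exp^n(x)+\pi\}$ for every $n$; thus every iterate of $A$ sits inside a horizontal strip. Total invariance of $I(f)$ yields $f^n(A)\cap I(f)=f^n(A\cap I(f))$, which is bounded by hypothesis. If $\sup_{z\in A}\re z<\infty$, then $|f(z)|\le e^{\sup\re z}+|a|$ on $A$, so $f(A)$ is bounded and we are done with $n=1$; I thus assume $\sup_A\re z=\infty$.

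Right-tail analysis. Fix $R$ larger than both the radius bounding $A\cap I(f)$ and the strip half-width $x+\pi$, and let $V$ be an unbounded component of $A\cap\{\re z\ge R\}$. Then $V$ is closed, connected, disjoint from $I(f)$, with $\sup_V\re z=\infty$. The bound $|\im w|\le K_1:=\exp(x)+\pi$ on $f(A)$ translates, via $\im f(z)=e^{\re z}\sin(\im z)+\im a$, into $|\sin(\im z)|\le (K_1+|\im a|)e^{-\re z}$ on $V$. Hence $\im z$ is within $O(e^{-\re z})$ of some integer multiple $k_V\pi$; by connectedness of $V$ and the bound $|k_V|\le(x+\pi)/\pi$, the integer $k_V$ is well defined on the far right of $V$. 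The heart of the proof is the claim that $k_V$ must be odd.

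Granting the claim, $\cos(\im z)\to -1$ on the far right of $V$, so $\re f(z)=e^{\re z}\cos(\im z)+\re a\le e^R+|\re a|$ uniformly across $V$ (any positive contribution to $\re f$ is confined to moderate $\re z\le R$, where $e^{\re z}\le e^R$). Combined with the trivial bound $|f(z)|\le e^R+|a|$ on $A\cap\{\re z<R\}$, this yields $\sup_{w\in f(A)}\re w\le e^R+|a|$, and hence $f^2(A)$ is bounded.

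The main obstacle is the claim that $k_V$ must be odd. Suppose $k_V$ is even. Then $\re f(z)\to+\infty$ on the far right of $V$, so $f(V)$ is an unbounded right tail of $f(A)$ whose asymptotic index $k^{(1)}$ is again defined. Iterating, one gets a sequence $(k_V,k^{(1)},k^{(2)},\dots)$ of asymptotic indices. If some $k^{(j)}$ is odd, the argument of the preceding paragraph applied at stage $j$ gives $f^{j+2}(A)$ bounded --- subject to showing that $j$ is uniformly bounded across the components $V$, which I expect to follow from the fact that components with longer runs of even indices must lie in thinner tubular neighborhoods of a fixed dynamic ray, and $A$ being closed and disjoint from this ray forces these run-lengths to be bounded. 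If all $k^{(j)}$ are even, then $\re f^j(z)$ grows as a tower of exponentials on the far right of $V$, and the Schleicher--Zimmer escape criterion places every $z\in V$ with sufficiently large $\re z$ on a dynamic ray $g_{\u(z)}\subset I(f)$; a shadowing/continuity argument along the closed connected $V$ then produces an honest point of $V$ that escapes, contradicting $V\cap I(f)=\emptyset$. The delicate technical points are this shadowing argument and the uniform bound on $j$.
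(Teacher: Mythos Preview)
Your proposal has genuine gaps.

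First, the preliminary bound $f^n(A)\subset\{|\im w|\le\exp^n(x)+\pi\}$ does not follow from the itinerary hypothesis as stated. The strips $S_{\u}$ are bounded by components of $f^{-1}(\gamma)$, not by horizontal lines; these curves are asymptotic to horizontal lines as $\re z\to+\infty$, but there is no a priori control on imaginary parts as $\re z\to-\infty$ (the paper's Remark~1 after the definition of exponentially bounded itineraries makes exactly this point). Since you need the bound $|\im w|\le K_1$ on \emph{all} of $f(A)$ to run the $\sin(\im z)$ estimate, this is a real issue, not just a cosmetic one. The paper handles it by restricting the imaginary-part bound to $\{\re z\ge 0\}$ via the constant $M$.

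Second, and more seriously, the core claim --- that $k_V$ must be odd --- is not proved. You yourself call it ``the main obstacle'' and reduce it to two further unproved assertions: a uniform bound on the run-length $j$ across all unbounded components $V$, and a shadowing argument producing an actual escaping point in $V$ when the run is infinite. Neither is carried out, and the heuristic that longer even-runs force $V$ into thinner tubes around a fixed ray is not substantiated. Note also that your stated plan (``$f^2(A)$ is bounded'') is abandoned partway through in favour of $f^{j+2}(A)$ for an uncontrolled $j$.

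By contrast, the paper argues the contrapositive in a few lines with no case analysis. Assuming $f^n(A)$ unbounded for all $n$, one fixes an arbitrarily large $x_0$ and uses connectedness of $f^n(A)$ to pick $z_n\in A$ with $|f^n(z_n)|=\exp^n(x_0)$. A short backward induction (using only $|f(z)|\approx e^{\re z}$ and the imaginary-part bound on $\{\re z\ge 0\}$) gives $\exp^j(x_0)-1\le|f^j(z_n)|\le 2\exp^j(x_0)+1$ for $0\le j\le n$. Any accumulation point of $(z_n)$ lies in $A$ (closed), satisfies these bounds for all $j$, hence escapes, and has modulus $\ge x_0-1$. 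Since $x_0$ is arbitrary, $A\cap I(f)$ is unbounded. This is precisely the ``shadowing'' step you leave open, executed directly without parity considerations.
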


This
  is essentially a (simpler) variant
 of \cite[Lemma 6.5]{nonlanding}, and
 can be proved easily in the same manner using
 the combinatorial terminology of that paper.
 Instead, we give an alternative proof
  --- quite similar to the proof of the main theorem of
  \cite{jarqueconnected} --- that does not require
 familiarity with these concepts.

 \begin{proof}
  We prove the converse, so suppose that $f^n(A)$ is unbounded for all $n$. 
   (In particular, $A$ is nonempty.) 
   We need to show that $A\cap I(f)$ is unbounded.

  Similarly as in the proof of Proposition \ref{prop:expbounded}, set 
    \[ M := \sup\{ |\im z|:\ z\in S_0,\  \re z \geq 0\}.\]
   and $\alpha := \ln(3(|a|+M+2))$. Also pick some $z_0\in A$ and let
   $x_0 \geq \max(|z_0|,x)+\alpha$ be arbitrary.
   The hypotheses and (\ref{eqn:elementary}) imply that
     \begin{equation}\label{eqn:imparts}
       |\im f^n(z)| \leq \exp^n(x) + M \leq  \exp^n(x_0) \end{equation}
    whenever $z\in A$ and $n\geq 0$ such that $\re f^n(z)\geq 0$. Also, 
    again by (\ref{eqn:elementary}), 
      \begin{equation}\label{eqn:z0} |f^n(z_0)|\leq \exp^n(x_0). \end{equation}
 
  Let $n\in\N$. Recall that  $f^n(A)A$ is connected and unbounded by assumption.
   Hence by (\ref{eqn:z0}), there exists some
   $z_n\in A$ with
     \[ |f^n(z_n)| = \exp^n(x_0). \]
  We claim that 
   \begin{equation} \label{eqn:pullback}
     \exp^j(x_0)-1\leq |f^j(z_n)| \leq 2\exp^j(x_0)+1
   \end{equation}
  for $j=0,\dots,n$. Indeed, if $j<n$ is such that
  (\ref{eqn:pullback}) is true for $j+1$, then
    \begin{align*}
      \re f^j(z_n) &= \ln|f^{j+1}(z_n)-a| \geq
            \ln(|f^{j+1}(z_n)|-|a|) \\ 
     &\geq \ln(\exp^{j+1}(x_0)-|a|-1) = 
             \exp^j(x_0) - \ln\frac{\exp^{j+1}(x_0)}{\exp^{j+1}(x_0)-|a|-1} \\
      &\geq \exp^j(x_0) - \ln 2 > \exp^j(x_0) - 1. \end{align*}
  Similarly, we see that
   \[
     \re f^j(z_n) \leq \exp^j(x_0)+1. \]
   Together with \eqref{eqn:imparts}, this yields~\eqref{eqn:pullback} for $j$. 
    
  Now let $z$ be any accumulation point of the sequence $z_n$; since
   $a$ is closed (and the sequence is bounded), we have $z\in A$.
   By continuity, \eqref{eqn:pullback} holds also for $z$, and hence
   $z\in A\cap I(f)$. As $x_0$ can be chosen arbitrarily large, we have shown that
   $A\cap I(f)$ is unbounded, as required. 
 \end{proof}

\section{Proof of Theorem \ref{thm:main}} \label{sec:proof}

 The following two  lemmas  study the properties of sets that can
  disconnect the escaping set of an exponential map with accessible singular value.

 \begin{lem} \label{lem:Aunbounded}
  Let $f$ be an exponential map and
   suppose that $A\subset\C\setminus I(f)$ 
   disconnects the escaping set. Then the real parts of $A$ are not bounded
   from above.
 \end{lem}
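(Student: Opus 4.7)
The plan is to prove this by contradiction. Suppose that $R := \sup\{\re z : z \in A\}$ is finite. Using Lemma \ref{lem:disconnected}, we may assume without loss of generality that $A$ is closed and connected: if $I(f)$ is disconnected by some set, then it is disconnected by a closed connected one, and the whole purpose of this lemma is to feed into the proof of Theorem \ref{thm:main}, where $A$ will be produced this way. Once $A$ is closed, $\C\setminus A$ is open, and the right half-plane $H := \{z \in \C : \re z > R\}$ is a connected subset of $\C\setminus A$.

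The key step is to show that every escaping point lies in the same connected component of $\C\setminus A$ as $H$. Given $z \in I(f)$, since $A$ is closed and $z \notin A$, there is an open disk $U$ centered at $z$ with $\cl{U}\cap A = \emptyset$. Because $z \in I(f) \subset J(f)$, we have $U \cap J(f) \neq \emptyset$, so Proposition \ref{prop:properties}(\ref{item:toplusinfinity}) yields a curve $\gamma : [0,\infty) \to I(f)$ with $\gamma(0) \in U$ and $\re \gamma(t) \to \infty$ as $t \to \infty$. Since $\gamma([0,\infty)) \subset I(f)$ is disjoint from $A$, and $U$ is a connected subset of $\C\setminus A$ containing both $z$ and $\gamma(0)$, the point $z$ lies in the same connected component of $\C\setminus A$ as every $\gamma(t)$. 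Choosing $t$ large enough that $\re \gamma(t) > R$ places $\gamma(t) \in H$, establishing the claim.

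Because $H$ is connected and disjoint from $A$, all of $H$ lies in a single component of $\C\setminus A$, and by the previous paragraph the same component contains every point of $I(f)$. This contradicts the assumption that two different components of $\C\setminus A$ meet $I(f)$, so $\sup\{\re z : z \in A\} = \infty$ after all. I do not expect any serious obstacle: the argument is purely topological and rests entirely on Proposition \ref{prop:properties}(\ref{item:toplusinfinity}), whose content is exactly tailored to produce escaping rays connecting arbitrary neighborhoods in the Julia set to the far right half-plane. The only bookkeeping issue is making sure $A$ may be taken closed, which is handled by the reduction above.
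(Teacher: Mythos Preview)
Your argument is essentially the paper's proof written out in full: the paper merely says the lemma ``follows immediately from Proposition~\ref{prop:properties}~(\ref{item:toplusinfinity})'', and your use of that proposition to connect every escaping point through $I(f)$ to a fixed right half-plane is exactly the intended mechanism.

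One small logical slip: invoking Lemma~\ref{lem:disconnected} does not let you assume \emph{this particular} $A$ is closed and connected --- that lemma only manufactures \emph{some} closed connected disconnecting set, with no relation to the original $A$ and no a~priori bound on its real parts. You already note that the only application has $A$ closed, which is enough; alternatively, you can drop the closedness assumption altogether by shrinking $U$ to $z$ and using that connected components of $\C\setminus A$ are relatively closed, so the limit $z$ of the starting points $\gamma_n(0)$ still lies in the component containing $H$.
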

 \begin{proof}
  This follows immediately from 
   Proposition~\ref{prop:properties}~(\ref{item:toplusinfinity}).
 \end{proof}

 \begin{lem} \label{lem:stillseparating}
  Let $f=f_a$ be an exponential map with accessible singular
  value $a$. 

  Suppose that $A\subset\C\setminus I(f)$ is connected and
   disconnects the escaping set. Then
  \begin{enumerate}
   \item If the real parts of $A$ are bounded from 
    below, then $f(A)$ also disconnects the escaping set.
   \item The common itinerary of the points in $A$ is exponentially bounded.
  \end{enumerate}
 \end{lem}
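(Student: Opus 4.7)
First, note that $A$ being connected and disjoint from every $f^{-n}(\gamma)\subset I(f)$ (using that $a$ is omitted by $f$) forces $f^n(A)\subset S_{\u_n}$ for all $n\geq 0$, with $f^n(A)$ in the interior of $S_{\u_n}$; this defines the common itinerary $\addu$.

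\textbf{Part (1).} The key input is that $f|_{S_{\u_0}}\colon S_{\u_0}\to\C\setminus\gamma$ is a homeomorphism. My plan is to exhibit a component $U$ of $\C\setminus A$ that meets $I(f)$ with $\cl U\subset S_{\u_0}$, and then transport it via $f$. The two components $H_+,H_-$ of $\C\setminus S_{\u_0}$ both lie in $\C\setminus A$; under the bounded-below hypothesis, the left half-strip $\{z\in S_{\u_0}:\re z<\inf\re A\}$ is a connected subset of $S_{\u_0}\setminus A$ whose closure meets both boundary curves $\gamma_+$ and $\gamma_-$, and so combines $H_+,H_-$ into a single component $U_0$ of $\C\setminus A$ that also contains $\gamma_\pm$. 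Since $A$ disconnects $I(f)$, some other component $U\neq U_0$ must meet $I(f)$; because components of $\C\setminus A$ are clopen and $\gamma_\pm\subset U_0$, we conclude $\cl U\cap\gamma_\pm=\emptyset$, so $\cl U\subset S_{\u_0}$ and $\partial U\subset A$. Consequently $\partial_{\C}f(U)\subset f(A)$, making $f(U)$ clopen in $\C\setminus f(A)$ and hence a component. Finally, $\gamma\cap f(A)=\emptyset$ (again as $a$ is omitted) while $f(U)\subset\C\setminus\gamma$, so $\gamma$ lies in a component of $\C\setminus f(A)$ distinct from $f(U)$; both contain escaping points (in $\gamma\setminus\{a\}$ and $f(U\cap I(f))$ respectively), yielding the disconnection.

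\textbf{Part (2).} The plan is to approximate $A$ by escaping points and apply Proposition~\ref{prop:expbounded}. By Lemma~\ref{lem:Aunbounded}, $\re A$ is unbounded above; combined with standard facts that the Fatou components of exponential maps are bounded (the only possibility being Siegel disks, once attracting and parabolic basins are excluded by $a\in J(f)$), it follows that some $z_*\in A$ lies in $J(f)=\cl{I(f)}$. Choose escaping points $z_n\to z_*$ with itineraries $v_0(n)v_1(n)\dots$. Since $f^j(z_*)$ lies in the \emph{interior} of $S_{\u_j}$, continuity gives $f^j(z_n)\in S_{\u_j}$, i.e.\ $v_j(n)=\u_j$, for all $n$ large enough depending on $j$. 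Proposition~\ref{prop:expbounded} applied to $z_n$ yields $2\pi|v_j(n)|\leq\exp^j(|z_n|+\alpha)$ with $\alpha$ the constant from its proof. A diagonal extraction $z_{n_k}\to z_*$ with $v_j(n_k)=\u_j$ for all $j\leq k$, followed by the limit $k\to\infty$ at each fixed $j$, then gives $2\pi|\u_j|\leq\exp^j(|z_*|+\alpha)$ for every $j$, so $\addu$ is exponentially bounded.

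\textbf{The main obstacle} I anticipate is establishing $\cl U\cap\gamma_\pm=\emptyset$ in Part~(1), most delicate when $U$ is unbounded to the right, where $U$ and a boundary curve $\gamma_\pm$ could \emph{a priori} be asymptotic within the strip. The clean resolution is the clopen property of connected components inside the open set $\C\setminus A$: once $\gamma_\pm$ is identified as belonging to the distinct component $U_0$, its closure is automatically separated from $U$.
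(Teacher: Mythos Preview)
Your Part~(1) is correct and follows the same idea as the paper: identify a component of $\C\setminus A$ trapped inside a single itinerary strip (the paper does this by observing that every component of $f^{-1}(\gamma)$ extends into any left half plane, hence lies in the ``unbounded'' component), and push it forward via the conformal isomorphism $f|_{S_j}\colon S_j\to\C\setminus\gamma$. Your handling of the boundary issue via the relative closedness of components in $\C\setminus A$ is fine.

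Your Part~(2), however, takes a different route from the paper and contains a genuine gap. The paper does \emph{not} locate a Julia point in $A$; instead it iterates Part~(1), showing inductively that $f^n(A)$ must ``surround'' either $f^n(z_0)$ (for some fixed $z_0\in I(f)$) or some $f^j(a)$ with $j<n$, and then reads off the itinerary bound from Proposition~\ref{prop:expbounded} applied to $z_0$ and $a$. Your approach --- approximate a point $z_*\in A\cap J(f)$ by escaping points and pass the bound from Proposition~\ref{prop:expbounded} to the limit --- is attractive and would work, but the step producing $z_*$ is not justified: that Siegel disks of exponential maps are bounded is \emph{not} a standard fact, and to my knowledge is open in general (it is known only under additional hypotheses on the rotation number). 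So as written, the implication ``$\re A$ unbounded above $\Rightarrow A\cap J(f)\neq\emptyset$'' is unproved.

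The repair is easy and does not need boundedness of Siegel disks. If $A\cap J(f)=\emptyset$, then $A$ lies in a single Fatou component $\Omega$; since $\Omega$ is connected and disjoint from the strip boundaries, $f^n(\Omega)$ lies in a single strip for every $n$, so $A$ and $\Omega$ share the same itinerary. But $\Omega$ is eventually periodic (a Siegel disk or a preimage of one), hence so is this itinerary, which is therefore bounded and a fortiori exponentially bounded. With this case handled separately, your approximation argument covers the remaining case $A\cap J(f)\neq\emptyset$ and the proof goes through.
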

 \begin{proof}
  Let $\gamma$ be the curve from Definition \ref{defn:accessible}.
   Let $U$ be the component of $\C\setminus A$ that contains a left half plane, and let
   $V\neq U$ be another component of $\C\setminus A$ with $V\cap I(f)\neq\emptyset$.
   (Such a component exists by assumption.) Every component of
   $f^{-1}(\gamma)$ intersects every left half plane. Thus $f^{-1}(\gamma)\subset U$, and
   in particular $\cl{V}\cap f^{-1}(\gamma)=\emptyset$. 

  This means that $\cl{V}$ is contained in a single itinerary domain $S_j$
   and the real parts in $V$ are bounded from below. 
   As $f|_S$ is 
   a conformal isomorphism between $S_j$ and $\C\setminus \gamma$, it follows
   that $f(V)$ is a component of $\C\setminus f(A)$ that intersects
   the escaping set but does not intersect $\gamma$. Hence $f(A)$ disconnects
   the escaping set, which proves the first claim.

  Note that the second claim is trivial if $\cl{A}$ intersects the escaping set,   since every
   escaping point has exponentially bounded itinerary by Proposition
   \ref{prop:expbounded}. So we may suppose that
   $A$ is a closed set. Also recall that $a$ has exponentially bounded itinerary, which means that
   we may assume that 
   $a\notin \cl{f^n(A)}$ for all $n\geq 0$. Then the real parts of $f^n(A)$ are
   bounded from below for all $n$. 

  Hence for all $n\geq 0$, 
   $f^n(A)$ is a closed subset of $\C$ with real parts bounded from below and disconnecting
   $I(f)$. Let us say that $f^n(A)$ \emph{surrounds} a set $X\subset\C\setminus f^n(A)$ if
   $X$ does not belong to the component of $\C\setminus f^n(A)$ that contains a left half plane. 
    
  By assumption, $A$ surrounds some escaping point $z_0$. We claim that, for every $n\geq 0$,
   \begin{enumerate}
    \item[(*)] $f^n(A)$ surrounds either $f^n(z_0)$ or $f^j(a)$ for some $j< n$.
   \end{enumerate}
  This follows by induction using the same argument similarly
  as in the first part of
  the proof. 
   Indeed, 
  let $w=f^n(z_0)$ or $w=f^j(a)$ be the point surrounded by $f^n(A)$ by the 
  induction
   hypothesis, let $U$ be the component of $\C\setminus f^n(A)$ containing $w$, and let
   $S_j$ be the itinerary strip containing $f^n(A)$ and hence $U$. Now
   $f:S_j\to\C\setminus\gamma$ is a conformal isomorphism. Thus either $f(U)$, and hence
   $f(w)$, is surrounded
   by $f^{n+1}(A)$, or $U$ is mapped to the component of $\C\setminus f^{n+1}(A)$ that contains
   a left half plane and $f^{n+1}(A)$ surrounds $\gamma$, 
   and hence $a$. The induction is complete in either case.

  Because 
   $z_0$ and $a$ both have exponentially bounded itineraries, it follows from (*) that all points in
   $A$ do also. 
 \end{proof}

 Now we are ready to prove Theorem \ref{thm:main}. We begin by treating the 
  case where $f$ has a unique and non-periodic kneading sequence. This includes
  the second case of Theorem \ref{thm:main}, as well as the case of all
  escaping endpoints.

 \begin{thm} \label{thm:nonperiodic}
  Suppose that $f=f_a$ is an exponential map with accessible singular value with 
   unique kneading sequence $\addu=\u_0 \u_1 \u_2 \dots$. 
   If $\addu$ is not periodic, then $I(f)$ is connected.
 \end{thm}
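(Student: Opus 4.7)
The plan is to argue by contradiction. Suppose $I(f)$ is disconnected; by Lemma~\ref{lem:disconnected}, there exists a closed, connected set $A\subset\C\setminus I(f)$ that disconnects $I(f)$. Lemma~\ref{lem:stillseparating}(2) then furnishes a common, exponentially bounded itinerary $\s_0\s_1\dots$ for the points of $A$. The overall strategy is to apply Proposition~\ref{prop:boundedimage} to obtain a bounded iterate $f^m(A)$, propagate the disconnecting property to $f^{m-1}(A)$ via repeated application of Lemma~\ref{lem:stillseparating}(1), and then contradict Lemma~\ref{lem:Aunbounded}.

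The non-periodicity of $\addu$ enters through the following key claim: the set $N:=\{n\ge 0: a\in\cl{f^n(A)}\}$ has at most one element. Since $\addu$ is the \emph{unique} itinerary of $a$, the orbit point $f^k(a)$ lies in the open strip $S_{\u_k}$ for every $k\ge 0$. If $a\in\cl{f^n(A)}$, pick a sequence in $f^n(A)\subset\cl{S_{\s_n}}$ converging to $a$ and iterate: the $k$-th image lies in $\cl{S_{\s_{n+k}}}$ and converges to $f^k(a)\in S_{\u_k}$. Distinct open strips are disjoint, and the boundary $\partial S_{\s_{n+k}}\subset f^{-1}(\gamma)$ is disjoint from the open strip $S_{\u_k}$, so these two strip indices must agree: $\s_{n+k}=\u_k$ for every $k\ge 0$. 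Two distinct elements $n_1<n_2$ in $N$ would then force $\u_{i+(n_2-n_1)}=\u_i$ for all $i\ge 0$, contradicting the non-periodicity of $\addu$.

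Next I would apply Proposition~\ref{prop:boundedimage} to $A$ (the hypotheses hold because $A\cap I(f)=\emptyset$ is bounded and the itineraries are exponentially bounded) and let $m\ge 0$ be the smallest integer for which $f^m(A)$ is bounded. The case $m=0$ is immediately impossible, since a bounded set cannot disconnect $I(f)$ by Lemma~\ref{lem:Aunbounded}. For $m\ge 1$, the identity $|f(z)-a|=\exp(\re z)$ shows that boundedness of $f^m(A)$ forces $\re f^{m-1}(A)$ to be bounded from above. However, $f^{m-1}(A)\subset\cl{S_{\s_{m-1}}}$ lies in a single itinerary strip, which has bounded imaginary parts; by minimality of $m$, $f^{m-1}(A)$ is itself unbounded, so its real parts must be unbounded from \emph{below}. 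Applying the exponential relation once more gives $a\in\cl{f^m(A)}$, i.e.\ $m\in N$.

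By the first step, $m$ is the unique element of $N$, so $a\notin\cl{f^n(A)}$ for $n=1,\dots,m-1$. Equivalently, $\re f^{k}(A)$ is bounded below for $k=0,\dots,m-2$. Hence Lemma~\ref{lem:stillseparating}(1) can be iterated along the chain $A,f(A),\dots,f^{m-1}(A)$ to conclude that $f^{m-1}(A)$ still disconnects $I(f)$. But then Lemma~\ref{lem:Aunbounded} forces $\re f^{m-1}(A)$ to be unbounded from above, contradicting the conclusion of the previous paragraph. The main obstacle is the combinatorial uniqueness $|N|\le 1$, for without it the iteration of Lemma~\ref{lem:stillseparating}(1) could break at more than one step, and the disconnecting property of $f^{m-1}(A)$ would not be available. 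This uniqueness is precisely what the non-periodicity of the kneading sequence supplies.
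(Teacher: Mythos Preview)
Your argument is correct and follows essentially the same route as the paper: both invoke Lemma~\ref{lem:disconnected}, Lemma~\ref{lem:stillseparating}, Proposition~\ref{prop:boundedimage}, and the key observation that $a\in\cl{f^{k+1}(A)}$ whenever $f^k(A)$ is unbounded to the left, in order to tie the itinerary of $A$ to the kneading sequence. The only difference is the organization of the endgame --- the paper exhibits two distinct indices $k_0+1<k_1$ in your set $N$ and reads off periodicity, whereas you use non-periodicity to force $|N|\le 1$ up front and then contradict Lemma~\ref{lem:Aunbounded} at the single index $m-1$.
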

 \begin{proof} 
  We prove the converse. So suppose that $I(f)$ is disconnected; we must show
   that $\addu$ is periodic. By Lemma \ref{lem:disconnected}, there exists
   a closed connected set $A\subset\C$ that disconnects the set of escaping points. 
   Then all 
   points of $A$ have a common itinerary $\addu'$, and this itinerary is exponentially
   bounded by Lemma \ref{lem:stillseparating}. Note that
 \begin{enumerate}
  \item[(*)] \emph{If $k\geq 0$ is such that $f^k(A)$ is unbounded to the left,
   then $a\in\cl{f^{k+1}(A)}$, and hence $\sigma^{k+1}(\addu')=\addu$.}
 \end{enumerate}
  (Here $\sigma$ denotes the shift map; i.e.\
    $\sigma(\u_0 \u_1 \u_2 \dots) = \u_1 \u_2 \dots$.) 

 By Proposition \ref{prop:boundedimage}, $f^k(A)$ is bounded for some $k$; 
   let $k_1$ be minimal with this property.
   Since $A$ is unbounded by Lemma \ref{lem:Aunbounded}, we must have
   $k_1>0$, and since $f^{k_1-1}(A)$ is contained in one of the domains
   $S_j$, it follows that $f^{k_1-1}(A)$ is unbounded to the left. 

 Now let $k_0$ be the minimal number for which $f^{k_0}(A)$ is unbounded
  to the left. By Lemma \ref{lem:stillseparating}, 
  $f^{k_0}(A)$ also disconnects the escaping set, and hence is unbounded
  to the right by Lemma \ref{lem:Aunbounded}. Thus
  $f^{k_0+1}(A)$ is unbounded, and therefore $k_0+1<k_1$ by definition.

 So (*) implies that
   \[ \sigma^{k_1}(\addu')= \addu = \sigma^{k_0+1}(\addu'), \]
 and hence $\sigma^{k_1-k_0-1}(\addu)=\addu$. Thus we have seen that
 $\addu$ is periodic, as claimed.
 \end{proof}

 We now complete the proof of Theorem \ref{thm:main} by covering the case
  where the singular value is escaping but not an endpoint. (Note that there
  are parameters that satisfy the hypotheses of both Theorem
  \ref{thm:nonperiodic} and Theorem \ref{thm:nonendpoint}.)

 \begin{thm} \label{thm:nonendpoint}
  Suppose that $f(z)=\exp(z)+a$ is an exponential map with $a\in I(f)$ such that $a$ is 
   a non-endpoint. Then $I(f)$ is connected.
 \end{thm}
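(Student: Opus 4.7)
The plan is to argue by contradiction. Assume $I(f)$ is disconnected and use Lemma~\ref{lem:disconnected} to pick a closed connected $A\subset\C\setminus I(f)$ that disconnects the escaping set. I would open exactly as in the proof of Theorem~\ref{thm:nonperiodic}: Lemma~\ref{lem:stillseparating} provides a common exponentially bounded itinerary $\addu'$ for the points of $A$; Lemma~\ref{lem:Aunbounded} ensures $A$ is unbounded; and Proposition~\ref{prop:boundedimage} produces a minimal $k_1\geq 1$ for which $f^{k_1}(A)$ is bounded. Since $f^{k_1-1}(A)$ is unbounded inside the single strip $\cl{S_{\u_{k_1-1}'}}$, which is a vertical slab of bounded imaginary width, it must be unbounded to the left in real parts, so $a\in\cl{f^{k_1}(A)}$.

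I would then set $B:=\cl{f^{k_1}(A)}$. This is compact, connected, contains $a$, and has more than one point (since $f$ is injective on each itinerary strip and $f^{k_1-1}(A)$ is infinite). Because $a\in I(f)$ is a non-endpoint, Proposition~\ref{prop:comb} gives that $B$ contains \emph{uncountably many} escaping points. To reach a contradiction I aim to establish the reverse inclusion
\[
B\cap I(f)\ \subseteq\ \{a,\ f(a),\ f^2(a),\ \ldots,\ f^{k_1-1}(a)\},
\]
whose right-hand side is finite.

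For this inclusion, take $w\in B\cap I(f)$. Since $\C\setminus I(f)$ is forward invariant, $f^{k_1}(A)\subset\C\setminus I(f)$, so $w$ is a proper limit point: there exists $z_n\in A$ with $f^{k_1}(z_n)\to w$. The sequence $\{z_n\}$ cannot be bounded, since a convergent subsequence would give $z\in A$ with $f^{k_1}(z)=w\in I(f)$, and backward invariance of $I(f)$ would then force $z\in I(f)\cap A=\emptyset$. So $z_n\to\infty$, and because $A$ lies in the strip $\cl{S_{\u_0'}}$ of bounded imaginary width, $|\re z_n|\to\infty$; I pass to a subsequence along which, for each $j=0,1,\ldots,k_1-1$, $\re f^j(z_n)$ tends either to $+\infty$ or to $-\infty$. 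If there is a smallest $j$ with $\re f^j(z_n)\to-\infty$, then $\exp(f^j(z_n))\to 0$ yields $f^{j+1}(z_n)\to a$, and continuity gives $w=f^{k_1-j-1}(a)$. Otherwise $\re f^j(z_n)\to+\infty$ for all $j<k_1$, and an easy induction shows $|f^{k_1}(z_n)|\to\infty$, contradicting finiteness of $w$.

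The main technical point I expect to handle with care is the strip geometry: each $\cl{S_k}$ is a vertical slab of uniformly bounded imaginary width, so any sequence in the strip tending to $\infty$ necessarily has $|\re|\to\infty$. This rests on the observation, noted parenthetically in the proof of Proposition~\ref{prop:expbounded}, that every component of $f^{-1}(\gamma)$ is asymptotic to a horizontal line. Once that geometric input is in hand, the tension between the uncountability of $B\cap I(f)$ supplied by Proposition~\ref{prop:comb} and its confinement to the finite postsingular orbit $\{f^m(a):0\leq m<k_1\}$ forces the contradiction.
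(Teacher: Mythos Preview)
Your argument is correct in outline and reaches the same contradiction via Proposition~\ref{prop:comb}, but it is considerably more elaborate than the paper's proof and contains one imprecise justification.

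The paper simply observes, quoting the proof of Theorem~\ref{thm:nonperiodic}, that there is a closed connected $A\subset\C\setminus I(f)$ and an index $k_0$ (the \emph{first} time $f^{k_0}(A)$ is unbounded to the left) such that $f^{k_0+1}(A)\cup\{a\}$ is closed and connected. Proposition~\ref{prop:comb} then finishes immediately: the set contains the non-endpoint $a$, hence uncountably many escaping points, yet $f^{k_0+1}(A)\subset\C\setminus I(f)$, so $a$ is its \emph{only} escaping point. By working with $k_0$ rather than $k_1$, the paper avoids your entire analysis of $B\cap I(f)$: since $f^j(A)$ lies in a half-strip $\{\re z\ge R\}\cap\cl{S_{\u'_j}}$ for each $j<k_0$, the sets $f^j(A)$ remain closed up to $j=k_0$, and the only possible new limit point of $f^{k_0+1}(A)$ is $a$ itself---no iterates $f(a),f^2(a),\dots$ ever enter. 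Your route via $k_1$ and the compact set $B=\cl{f^{k_1}(A)}$ is valid, but buys compactness at the cost of having to confine $B\cap I(f)$ to a finite piece of the postsingular orbit.

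There is also a gap in your justification of the dichotomy. You assert that each $\cl{S_k}$ is ``a vertical slab of uniformly bounded imaginary width'', citing the parenthetical remark in the proof of Proposition~\ref{prop:expbounded}. That remark, and the constant $M$ defined there, concern only the region $\{\re z\ge R\}$; nothing in the paper bounds the imaginary parts of the strips as $\re z\to-\infty$ (indeed, if $\gamma$ spiralled into $a$, the strips would not have globally bounded imaginary parts). Fortunately your conclusion survives: if $w_n\in\cl{S_k}$ with $w_n\to\infty$ and $\re w_n$ bounded, then in particular $\re w_n$ is bounded \emph{below}, so eventually $w_n\in\cl{S_k}\cap\{\re z\ge R\}$, where the imaginary parts \emph{are} bounded---contradicting $|w_n|\to\infty$. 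Thus the dichotomy $\re f^j(z_n)\to\pm\infty$ holds after all, but for a reason slightly different from the one you gave.
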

 \begin{proof}
  The singular value is accessible by Proposition \ref{prop:properties}. As shown in the proof of
   Theorem \ref{thm:nonperiodic}, if $I(f)$ was disconnected, 
   there would be an unbounded, closed, connected set
   $A\subset \C\setminus I(f)$ and some number $k_0$ such that
   $f^{k_0+1}(A)\cup\{a\}$ is closed and connected. But this is impossible
   by the Proposition \ref{prop:comb}.  
 \end{proof}

\bibliographystyle{hamsplain}
\bibliography{/Latex/Biblio/biblio}

\end{document}